\title{\bf Asymptotics of Kantorovich Distance for Empirical Measures of the Laguerre Model}
\author{Huaiqian Li \footnote{Email: {\color{blue}huaiqianlee@gmail.com}}
\quad Bingyao Wu\footnote{Email: {\color{blue}bingyaowu@163.com}}
  \vspace{2mm}
\\
{\footnotesize $^*$ Center for Applied Mathematics, Tianjin University, Tianjin 300072, China}\\
{\footnotesize $^\dag$ School of Mathematics and Statistics, Fujian Normal University, Fuzhou 350007, China}
}
\date{}
\def\R{\mathbb{R}}
\def\E{\mathbb{E}}
\def\L{\mathcal{L}}
\def\W{\mathrm{W}}
\def\idx{\mathbbm{1}}
\def\d{\textup{d}}
\def\CD{\textup{CD}}
\def\<{\langle}
\def\>{\rangle}
\def\Proof.{\noindent{\bf Proof. }}
\def\newdot{{\kern.8pt\cdot\kern.8pt}}
\newtheorem{theorem}{Theorem}[section]
\theoremstyle{definition}\newtheorem{remark}[theorem]{Remark}
\begin{document}
\allowdisplaybreaks
\maketitle
\makeatletter 
\renewcommand\theequation{\thesection.\arabic{equation}}
\@addtoreset{equation}{section}
\makeatother 

\begin{abstract}
We estimate the rate of convergence for the Kantorovich (or Wasserstein) distance between empirical measures of i.i.d. random variables associated with the Laguerre model of order $\alpha$ on $(0,\infty)^N$  and their common law, which is not compactly supported and has no rotational symmetry. Compared with the Gaussian case, our result is sharp provided the parameter $\alpha$ and the dimension $N$ are chosen in a specified regime.
\end{abstract}

{\bf MSC 2020:} primary  60B10, 60F25; secondary 58J65, 49Q22

{\bf Keywords:} empirical measure; Kantorovich distance; optimal transport; Laguerre model

\section{Introduction}\hskip\parindent
Let $\mathbb{M}$ be a Polish space and let $({\rm Z}_n)_{n\in\mathbb{N}}$ be a sequence of i.i.d. $\mathbb{M}$-valued random variables with common distribution $\mathfrak{m}$. Define the empirical measures associated with $({\rm Z}_n)_{n\in\mathbb{N}}$ as
$$\mathfrak{m}_n=\frac{1}{n}\sum_{k=1}^n \delta_{{\rm Z}_k},\quad n\in\mathbb{N},$$
where $\delta_\cdot$ stands for the Dirac measure. It is well known that, by the Glivenko--Cantelli theorem (see e.g. Theorem 7.1 on page 53 of \cite{Par1967}), almost surely, the sequence of random probability measures $(\mathfrak{m}_n)_{n\in\mathbb{N}}$ converges weakly to $\mathfrak{m}$ as $n\rightarrow\infty$. So, what about the rate of convergence and in what sense?

Let $d$ be the metric on $\mathbb{M}$ inducing the given topology of $\mathbb{M}$ and let $\mathcal{P}(\mathbb{M})$ be the class of all Borel probability measures on $\mathbb{M}$. For every $p\in[1,\infty)$, define the (pseudo) Kantorovich (or Wasserstein) distance $\W_p:\mathcal{P}(\mathbb{M})\times\mathcal{P}(\mathbb{M})\rightarrow[0,\infty]$ as the infimum
$$\W_p(\mu,\nu):= \inf_{\pi\in \Pi(\mu,\nu)} \Big(\int_{\mathbb{M}\times \mathbb{M}} d(x,y)^p\, \pi(\d x,\d y) \Big)^{1/p},\quad \mu,\nu\in \mathcal{P}(\mathbb{M}),$$
where $\Pi(\mu,\nu)$ is the class of all Borel probability measures on the product space $\mathbb{M}\times \mathbb{M}$ with
marginals $\mu$ and $\nu$,  respectively. For $p\in[1,\infty)$, let
$$\mathcal{P}_p(\mathbb{M})=\Big\{\mu\in\mathcal{P}(\mathbb{M}):\ \int_\mathbb{M}d(x,o)^p\, \mu(\d x)<\infty\Big\},$$
where $o$ is an arbitrary point in $\mathbb{M}$. It is well known that, for each $p\in[1,\infty)$, convergence in the Kantorovich distance $\W_p$ implies the weak convergence of sequences from $\mathcal{P}_p(\mathbb{M})$ in general, and if $\mathbb{M}$ is compact, then $\W_p$ metrizes the weak topology; see e.g. \cite[Theorem 6.9 and Corollary 6.13]{Villani2008}. See also \cite{Villani2008} for more details on the Kantorovich distance and its connection to the optimal transport theory.

So, it is natural to consider the (often challenging) problem of quantifying the rate of convergence of $\W_p(\mathfrak{m}_n,\mathfrak{m})$ as $n\to\infty$.

It is reasonable to see that a solution to this problem involves the distribution $\mathfrak{m}$ and also the geometry and topology of the space $(\mathbb{M},d)$. Being a classic subject with numerous applications, this kind of problem has attracted intensive investigation; see e.g. \cite{eAST,FG2015,BLG2014,DSS2013,AKT1984} for the i.i.d. case, \cite{Riek22,FG2015} for the Markov chain case, also
\cite{LiWu,LiWu2,LiWu3,eW1,WangWu21,fywany22,WZ19} for the case of continuous-time (subordinated) diffusion processes on compact or noncompact Riemannian manifolds with or without boundary and \cite{eW5} for the non-symmetric case, and even \cite{HMT} for the case of (non-Markovian) fractional Brownian motions on the flat torus and \cite{LiWu4} for the subordinated case.

The aim of the present work is to quantify the Kantorovich distance between empirical measures of i.i.d. random variables associated with the Laguerre model and their common distribution. The motivation comes from recent studies on the i.i.d. case associated with the Ornstein--Uhlenbeck (abbrev. OU) or Hermite model \cite{Ledoux3,Ledoux1} and the Jacobi model \cite{Zhu21}, respectively. The OU case corresponding to the standard Gaussian distribution which is rotationally symmetric and the Jacobi case corresponding to the Beta distribution which has compact support. However, the Laguerre case corresponding to the exponential distribution which has neither the compact support nor the rotational symmetry.

We should mention that, the Laguerre model is naturally associated with the Laguerre polynomial, which is one of the four classes of classical orthogonal polynomials and plays an important role in physics and mathematics; see \cite{Lebe}. Moreover, being a typical example of the Sturm--Liouville operator, the Laguerre model has been studied intensively in analysis and probability; see e.g. \cite{PS2021,NSS2017,BGL2014,Nowak04} and references therein.

In Section 2, we recall the Laguerre model and introduce the main results, and in Section 3, we present proofs of our main results.

\section{Preliminaries and main results}\hskip\parindent
We begin with the introduction of the Laguerre model; see \cite{NSS2017,BGL2014,Nowak04} for more details.

Let $N\in\mathbb{N}$  and $\R_>=(0,\infty)$. Let $\alpha=(\alpha_1,\cdots,\alpha_N)$ be a multi-index from $(-1,\infty)^N$. For each $x\in \R_>^N$, we always write $x=(x_1,\cdots,x_N)$.  Consider the probability measure given by
$$\mu^\alpha(\d x)=\prod_{i=1}^N \frac{x_i^{\alpha_i}e^{-x_i}}{\Gamma(1+\alpha_i)}\,\d x,$$
where $\Gamma(\cdot)$ stands for the Gamma function and $\d x$ denotes the Lebesgue measure on $\R^N$. The Laguerre (differential) operator of type $\alpha$, denoted by
$$\L^\alpha=\sum_{i=1}^N\big[x_i\partial_{x_i}^2 + (\alpha_i+1-x_i)\partial_{x_i}\big],$$
is non-positive and symmetric in $L^2(\R^N_>,\mu^\alpha)$, and $\L^\alpha$ has a self-adjoint extension in $L^2(\R^N_>,\mu^\alpha)$ which is denoted by the same notation.

Let $\mathbb{N}_0=\{0\}\cup\mathbb{N}$.  Consider the $N$-dimensional Laguerre polynomials of type $\alpha$, i.e.,
$$L^\alpha_n(x)=\prod_{i=1}^N L^{\alpha_i}_{n_i}(x_i),$$
for every $n=(n_1,\cdots,n_N)\in\mathbb{N}_0^N$ and every $x=(x_1,\cdots,x_N)\in \R_>^N$,
where for each $i=1,\cdots,N$,  $L^{\alpha_i}_{n_i}(x_i)$ is a one-dimensional Laguerre polynomial with degree $n_i$ of type $\alpha_i$ given by
$$L_{n_i}^\alpha(x_i)=\frac{1}{n_i!}x_i^{-\alpha_i}e^{x_i}\frac{\d^{n_i}}{\d x_i^{n_i}}\big(x_i^{n_i+\alpha_i}e^{-x_i}\big).$$
For every $n\in\mathbb{N}_0^N$ with $n=(n_1,\cdots,n_N)$, set $n!:=\prod_{i=1}^N n_i!$ and  $|n|:=n_1+\cdots+n_N$.  Let
$$l^\alpha_n(x)=\sqrt{n!} \prod_{i=1}^N\frac{L_{n_i}^{\alpha_i}(x_i)}{\Gamma(1+\alpha_i+n_i)},\quad x=(x_1,\cdots,x_N)\in\R_>^N,$$
be the normalized Laguerre polynomials. It is well known that the family $\{l_n^\alpha:\ n\in\mathbb{N}_0^N\}$ is an orthonormal basis of $L^2(\R_>^N,\mu^\alpha)$, and each $l^\alpha_n$ is an eigenfunction of $-\L^\alpha$ with corresponding eigenvalue $|n|$, i.e.,
$$-\L^\alpha l_n^\alpha=|n|l_n^\alpha, \quad n\in \mathbb{N}_0^N.$$

The Laguerre semigroup generated by $\L^\alpha$ is denoted by $\{P_t^\alpha\}_{t\geq0}$, which is known to be a symmetric diffusion semigroup in the sense of Stein \cite[page 65]{Stein70} (see .e.g. \cite{Nowak04}) and can be represented by
$$P_t^\alpha f(x)=\int_{\R_>^N}p_t^\alpha(x,y)f(y)\,\mu^\alpha(\d y),\quad f\in L^p(\R_>^N,\mu^\alpha),\,t>0,$$
for all $p\in[1,\infty]$, where $\{p_t^\alpha\}_{t>0}$ is the Laguerre kernel given by
$$p^\alpha_t(x,y)=\sum_{k\in\mathbb{N}_0^N} l^\alpha_k(x)l^\alpha_k(y) e^{-|k|t},\quad x,y\in\R_>^N,\,t>0.$$
Indeed, $p^\alpha_t(x,y)$ is strictly positive and infinitely differentiable  w.r.t. $(t,x,y)$ in $(0,\infty)\times\R_>^N\times\R_>^N$, and more precisely (see e.g. \cite[page 78]{Lebe}),
\begin{eqnarray*}
p^\alpha_t(x,y)&=&\prod_{k=1}^N\Gamma(1+\alpha_k)\frac{\exp\big[-\frac{e^{-t}}{1-e^{-t}}(x_k+y_k)\big]}{(1-e^{-t})(e^{-t}x_ky_k)^{\alpha_k/2}}  I_{\alpha_k}\bigg(\frac{2\sqrt{e^{-t}x_ky_k}}{1-e^{-t}}\bigg)\\
&=:&\prod_{k=1}^N p^{\alpha_k}_t(x_k,y_k),\quad (t,x,y)\in\R_>^{1+2N},
\end{eqnarray*}
where for each $\lambda\in\R$, $I_\lambda$ is the modified Bessel function of the first kind (also called Bessel function of imaginary argument) with order $\lambda$, defined as
$$I_\lambda(x)=\sum_{j=0}^\infty \frac{1}{\Gamma(j+1)\Gamma(\lambda+j+1)}\Big(\frac{x}{2}\Big)^{\lambda+2j},\quad x\in\R_>;$$
see e.g. \cite[Section 5.7]{Lebe}. Moreover, by \cite[(4.5) and (4.6)]{Muck1969}, for each $k=1,\cdots,N$, there exist constants $C_k,c_k>0$ depending only on $\alpha_k$ such that
\begin{equation}\label{HUP-1}
c_k\phi_t(x_k,y_k)\leq p^{\alpha_k}_t(x_k,y_k)\leq C_k\phi_t(x_k,y_k),\quad (t,x_k,y_k)\in\R_>^3,
\end{equation}
where
\begin{equation}\label{HUP}
\phi_t(x,y):=
\begin{cases}
\frac{1}{(1-e^{-t})^{\alpha+1}}\exp\Big(-\frac{e^{-t}(x+y)}{1-e^{-t}}\Big),\quad&{0<xy<\frac{(1-e^{-t})^2}{4e^{-t}},}\\
\frac{(4e^{-t}xy)^{-\alpha/2-1/4}}{e(1-e^{-t})^{1/2}}\exp\Big(-\frac{e^{-t}(x+y)-2(e^{-t}xy)^{1/2}}{1-e^{-t}}\Big),
\quad&{xy\geq\frac{(1-e^{-t})^2}{4e^{-t}}.}
\end{cases}
\end{equation}
Indeed, \eqref{HUP} comes from standard estimates on the modified Bessel function of the first kind.

Now consider the case when $N=1$. Let $\alpha\in(-1,\infty)$. It is easy to see that, for every $f\in C^2(\R_>)$, the \emph{carr\'{e} du champ} associated with $\L^\alpha$ is given as follows, i.e.,
$${\rm G}^\alpha(f)(x):=\frac{1}{2}\big[\L^\alpha(f^2)-2f\L^\alpha f\big](x)=x\big[f'(x)\big]^2,\quad x\in\R_>.$$
Then $(\R_>,\mu^\alpha,{\rm G^\alpha})$ is a Markov Triple; see \cite{BGL2014} for more details. It is easy to see that the intrinsic metric induced by the Markov Triple $(\R_>,\mu^\alpha,{\rm G^\alpha})$ is given by
$$\varrho(x,y)=\left|\int_x^y \frac{1}{\sqrt{t}}\,\d t\right|=2|\sqrt{x}-\sqrt{y}|,\quad x,y\in\R_>.$$

Let $p\in [1,\infty)$ and let $\mathcal{P}(\R_>^N)$ be the class of all Borel probability measures on $\R_>^N$. On the product space $\R_>^N$, we endow the metric
$$\varrho_N(x,y)=\Big(\sum_{j=1}^N \varrho(x_j,y_j)^2\Big)^{1/2},\quad x=(x_1,\cdots,x_N)\in\R_>^N,\,y=(y_1,\cdots,y_N)\in\R_>^N.$$
We use $\varrho_N(0,\cdot)$ to denote the distance to the boundary $\{0\}$ of $\R_>^N$. Obviously, $\varrho_N=\varrho$ when $N=1$. For every $\mu,\nu\in\mathcal{P}(\R_>^N)$, recall the (pseudo) Kantorovich
 distance of order $p$ between $\mu$ and $\nu$, i.e.,
\begin{equation*}\label{Wp}
\W_p(\mu,\nu)=\Big(\inf_{\gamma\in\Pi(\mu,\nu) }\int_{\R_>^N\times\R_>^N}\varrho_N(x,y)^p\,\gamma(\d x,\d y)\Big)^{1/p}\in[0,\infty],
\end{equation*}
where $\Pi(\mu,\nu)$ is the set of all couplings of $\mu$ and $\nu$, i.e., the set of Borel probability
measures $\gamma$ on $\R_>^N\times\R_>^N$ with marginals $\gamma(A\times \R_>^N) = \mu(A)$ and $\gamma(\R_>^N \times B) = \nu(B)$ for all Borel subsets $A,B$ of $\R_>^N$. In what follows, we concentrate on the quadratic Kantorovich distance $\W_2$.

The main result is presented in the next theorem. In what follows, we employ the notation $a\preceq b$, which means that $a\leq c b$ for some positive constant $c>0$, and $c$ may be either numerical or depend only on the parameters $N,\alpha$ but never on $n$. And we write $a\asymp b$ if both $a\preceq b$ and $b\preceq a$ hold. For every $\alpha=(\alpha_1,\cdots,\alpha_N)\in\R^N$, set $\alpha_\star:=\alpha_1+\cdots+\alpha_N$.
\begin{theorem}\label{main-thm}
Let  $n\in\mathbb{N}$, $\alpha\in[-1/2,\infty)^N$ and $X_1,\cdots,X_n$ be independent random variables on $\R_>^N$ with common distribution $\mu^\alpha$. Set
$$\mu_n=\frac{1}{n}\sum_{j=1}^n \delta_{X_j}.$$
Then, for every large enough $n$,
\begin{eqnarray}\label{main-thm-1}
\E\big[\W_2(\mu_n,\mu^\alpha)^2\big]\preceq\left\{\begin{array}{ll}\medskip
\frac{\log n }{n^{1/(\alpha_\star+N)}},&\quad \alpha_\star\in(1-N,\infty),\\\medskip
\frac{(\log n)^2}{n},&\quad \alpha_\star=1-N,\\\medskip
\frac{(\log n)^{2(\alpha_\star+N)-1}}{n},&\quad \alpha_\star\in (1/2-N,1-N),\\\medskip
\frac{\log\log n}{n},&\quad \alpha_\star=1/2-N.
\end{array}
\right.
\end{eqnarray}
\end{theorem}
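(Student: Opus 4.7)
The plan is to follow the semigroup-interpolation strategy of Ledoux developed for the Hermite/OU model in \cite{Ledoux3,Ledoux1} and adapted to the Jacobi setting in \cite{Zhu21}. For $t>0$ to be chosen later, I would introduce the heat-smoothed empirical measure $\mu_n^t := P_t^\alpha\mu_n$, which is absolutely continuous with respect to $\mu^\alpha$ with density $f_n^t(y) = \frac{1}{n}\sum_{j=1}^n p_t^\alpha(X_j,y)$, and split
$$\E\big[\W_2(\mu_n,\mu^\alpha)^2\big] \leq 2\,\E\big[\W_2(\mu_n,\mu_n^t)^2\big] + 2\,\E\big[\W_2(\mu_n^t,\mu^\alpha)^2\big],$$
treating the two terms separately and then optimizing over $t=t(n)$.

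For the smoothing error, coupling each atom $\delta_{X_j}$ with the measure $p_t^\alpha(X_j,\cdot)\mu^\alpha$ and using stationarity gives
$$\E\big[\W_2(\mu_n,\mu_n^t)^2\big] \leq \int\!\!\int \varrho_N(x,y)^2\,p_t^\alpha(x,y)\,\mu^\alpha(\d x)\mu^\alpha(\d y),$$
which by the product structure of $p_t^\alpha$ and $\mu^\alpha$ reduces to $N$ copies of the one-dimensional integral. Using the spectral expansion of $\sqrt{x}$ in $\{l_k^{\alpha_i}\}$ together with the identity ${\rm G}^{\alpha_i}(\sqrt{x})\equiv 1/4$, a short computation bounds each such integral by $\preceq t$ for small $t$, so this term is $\preceq Nt$. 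For the smoothed term $\E[\W_2(\mu_n^t,\mu^\alpha)^2]$, I would employ a Peyre-type inequality (in the form used in \cite{Zhu21} and ultimately going back to \cite{Ledoux1}) that converts the transport cost to a weighted $H^{-1}$-type functional of $f_n^t-1$. Expanding in the orthonormal Laguerre basis $\{l_k^\alpha\}_{k\in\mathbb{N}_0^N\setminus\{0\}}$ and using independence of $X_1,\dots,X_n$ together with $\int l_k^\alpha\,\d\mu^\alpha = \delta_{k,0}$ produces the clean identity
$$\E\Big[\int (f_n^t-1)(-\L^\alpha)^{-1}(f_n^t-1)\,\d\mu^\alpha\Big] = \frac{1}{n}\sum_{k\in\mathbb{N}_0^N\setminus\{0\}}\frac{e^{-2|k|t}}{|k|}.$$
The weight arising in the Peyre inequality---controlled by the $L^\infty$-norm of the smoothed density $f_n^t$, which via \eqref{HUP-1}--\eqref{HUP} scales like $t^{-(\alpha_\star+N)}$ in the near-boundary regime---upgrades the spectral dimension from the bare counting exponent $N$ to the weighted effective dimension $\alpha_\star+N$, leading to $\E[\W_2(\mu_n^t,\mu^\alpha)^2] \preceq n^{-1}\Phi_{\alpha_\star+N}(t)$, where $\Phi_d(t)\asymp t^{1-d}$ for $d>1$, $\Phi_1(t)\asymp\log(1/t)$, $\Phi_d(t)\asymp[\log(1/t)]^{2d-1}$ for $d\in(1/2,1)$, and $\Phi_{1/2}(t)\asymp\log\log(1/t)$. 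Substituting and optimizing $Nt+n^{-1}\Phi_{\alpha_\star+N}(t)$ over $t=t(n)$ then reproduces the four rates in \eqref{main-thm-1}.

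The hardest part is exactly this extraction of the exponent $\alpha_\star+N$ from the heat-kernel input: because the pure spectrum of $-\L^\alpha$ (integer eigenvalues $|k|$ of multiplicity $\binom{|k|+N-1}{N-1}$) depends only on $N$, the weighted dimension $\alpha_\star+N$ must enter through the sharp $L^\infty$-type estimate $\|p_t^\alpha(x,\cdot)\|_\infty \asymp t^{-(\alpha_\star+N)}$ supplied by \eqref{HUP-1}--\eqref{HUP}, specifically via the Bessel-function asymptotics in the near-boundary regime described in \eqref{HUP}. The hypothesis $\alpha\in[-1/2,\infty)^N$ enters here both to ensure $\varrho_N(\cdot,o)^2\in L^1(\mu^\alpha)$ (so the coupling integrals in the smoothing step converge) and to keep the Bessel heat kernel well-behaved as $x_i\to 0$. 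The four thresholds $\alpha_\star+N>1$, $=1$, $\in(1/2,1)$, $=1/2$ correspond precisely to the distinct scaling regimes of the combined quantity $Nt+n^{-1}\Phi_{\alpha_\star+N}(t)$ as $t\downarrow 0$, mirroring the phase transitions already observed in the Gaussian \cite{Ledoux1} and Jacobi \cite{Zhu21} cases.
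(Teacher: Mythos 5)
Your Part-(2) smoothing bound $\E[\W_2(\mu_n,\mu_n^t)^2]\preceq Nt$ and your spectral identity
$$\E\Big[\int (f_n^t-1)(-\L^\alpha)^{-1}(f_n^t-1)\,\d\mu^\alpha\Big]=\frac{1}{n}\sum_{k\neq 0}\frac{e^{-2|k|t}}{|k|}$$
are both correct. But there is a genuine gap in Part (3), and it is precisely the step you flag as the hardest one. Your spectral sum depends only on the spectrum of $-\L^\alpha$ and hence only on $N$: it equals $\frac{1}{2n}\int_{2t}^\infty\big[(1-e^{-s})^{-N}-1\big]\d s$, which is $\asymp t^{1-N}/n$ for $N\ge2$ and $\asymp\log(1/t)/n$ for $N=1$. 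Optimizing $Nt+\Phi_N(t)/n$ then gives $n^{-1/N}$ (or $\log n/n$ for $N=1$), the same bound as in the Gaussian case, with no $\alpha_\star$ anywhere. This is strictly \emph{worse} than the stated rates whenever $\alpha_\star<0$, which covers the entire last three branches of \eqref{main-thm-1} (and the part of the first branch with $\alpha_\star\in(1-N,0)$). For instance, at $N=1$, $\alpha=-1/2$, your approach yields $\log n/n$, not $\log\log n/n$. So the proposal, as written, does not prove the theorem for those cases.

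The mechanism you invoke to "upgrade the spectral dimension" also fails: you posit a weight controlled by $\|f_n^t\|_\infty$ and assert $\|p_t^\alpha(x,\cdot)\|_\infty\asymp t^{-(\alpha_\star+N)}$, but the paper's Remark 2.3 shows that $\sup_{x,y}p_t^\alpha(x,y)=\infty$ for every $t>0$ --- ultra-contractivity fails for the Laguerre semigroup. The near-boundary bound $t^{-(\alpha_j+1)}$ only holds for $x_j\preceq t$; away from the boundary the kernel grows without bound as $x_j\to\infty$. There is no global $L^\infty$ bound to feed into a Peyre-type weight, and the paper does not use a weighted inequality at all: it uses the unweighted Ledoux inequality $\W_2(h\mu^\alpha,\mu^\alpha)^2\leq 4\int\G^\alpha\big((-\L^\alpha)^{-1}(h-1)\big)\d\mu^\alpha$.

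What actually produces the exponent $\alpha_\star+N$ in the paper is the missing ingredient in your proposal: a \emph{truncation} of the sample to a box $B_R$ with $R\asymp\sqrt{\log n}$, giving a three-term decomposition $\E[\W_2(\mu_n,\mu^\alpha)^2]\preceq\E[\W_2(\mu_n,\mu_{n,R})^2]+\E[\W_2(\mu_{n,R},\mu_{n,R,t})^2]+\E[\W_2(\mu_{n,R,t},\mu^\alpha)^2]$. The truncation error is $O(n^{-c})$ by a tail estimate; a bias term $b=\E[p_t^\alpha(X_{1,R},\cdot)]-1$ appears and is killed by the Poincar\'e inequality; and the variance is governed by the \emph{restricted} on-diagonal trace $\int_{B_R}p_{2(t+s)}^\alpha(x,x)\,\mu^\alpha_R(\d x)$, which via \eqref{HUP-1}--\eqref{HUP} is $\preceq R^{2(\alpha_\star+N)}/s^{\alpha_\star+N}$ for $s\preceq R^{-2}$ and exponentially small for larger $s$. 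It is the interplay between the $R$-cutoff and the near-boundary Bessel asymptotics --- not an $L^\infty$ bound on the density, and not the bare spectrum --- that replaces $N$ by $\alpha_\star+N$, and in particular makes the bound \emph{smaller} than $n^{-1/N}$ when $\alpha_\star<0$. Your proposal needs this truncation and the bias/variance split to recover the paper's result in that regime.
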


Some remarks are in order.
\begin{remark}
Compared with the main result in \cite{Ledoux1}, for the case when $N=1$ and $\alpha=-1/2$, the rate $(\log\log n)/n$ in \eqref{main-thm-1} coincides with the one-dimensional OU case, and for the case when $N=2$ and $\alpha=(\alpha_1,\alpha_2)=(-1/2,-1/2)$, the rate $(\log n)^2/n$ in \eqref{main-thm-1} coincides with the two-dimensional OU case. We should mention that the rate  $(\log n)^2/n$ is sharp for the OU case in the critical two-dimension situation which is obtained recently in \cite{CP2023}; refer also to the aforementioned paper for the precise renormalized limit. However, when $N\geq3$, the result in \eqref{main-thm-1} contains an extra factor $\log n$ which is worse than the OU case in \cite[Theorem 1.1]{Ledoux3}.
\end{remark}

\begin{remark} The ultra-contractivity, which plays an important role in \cite{Zhu21,eAST,WangWu21,fywany22,WZ19}, fails for $\{P_t^\alpha\}_{t>0}$. Indeed, for every $t>0$,
\begin{eqnarray*}\|P_t^\alpha\|_{L^1(\R_>^N,\mu^\alpha)\rightarrow L^\infty(\R_>^N,\mu^\alpha)}&:=&\sup_{\|f\|_{L^1(\R_>^N,\mu^\alpha)}\leq 1}\|P_t^{\alpha} f\|_{L^\infty(\R_>^N,\mu^\alpha)}\\
&=&\sup_{(x,y)\in\R_>^{2N}}p^\alpha_t(x,y)=\infty.
\end{eqnarray*}
Consider the $N=1$ case. Applying \eqref{HUP}, we have for any $x\geq\frac{1-e^{-t}}{2e^{-t/2}}$,
\begin{eqnarray*}
\phi_t(x,x)=\frac{(4e^{-t}x^2)^{-\alpha/2-1/4}}{e(1-e^{-t})^{1/2}}\exp\Big(\frac{2e^{-t/2}x(1-e^{-t/2})}{1-e^{-t}}\Big),
\end{eqnarray*}
and hence
$$\sup_{(x,y)\in\R_>^2}\phi_t(x,y)\geq\sup_{x\geq\frac{1-e^{-t}}{2e^{-t/2}}}\phi_t(x,x)=\infty,$$
which together with \eqref{HUP-1} clearly implies that $\sup_{(x,y)\in\R_>^2}p^\alpha_t(x,y)=\infty$.
\end{remark}

Due to technical issues, we are not able to obtain reasonable upper bound estimates on $\E[\W_p(\mu_n,\mu^\alpha)^p]$ for other $p$'s at the moment. The problem seems much more complicated than the OU case. So we leave it for future study.

\section{Proofs of the main result}\hskip\parindent
In this section, we present the proof of the main theorem. The proof is based on the standard truncation argument and the PDE approach introduced in \cite{Ledoux1,BGV} and \cite{eAST}, respectively.

\begin{proof}[Proof of Theorem \ref{main-thm}]
Let $R>0$ and let $B_R= B_R^1\times\cdots\times B_R^N$, where for each $k=1,\cdots,N$,
 $B_R^k:=\{y\in\R_>:\ \varrho(0,y)<R\}$,
i.e., $B_R^k=\{y\in\R_>:\ y<R^2/4\}$. Set
$$\mu_R^\alpha=\frac{\mathbbm{1}_{B_R}}{\mu^\alpha(B_R)} \mu^\alpha.$$
Let $Y_1,\cdots,Y_n$ be independent random variables with common distribution $\mu_R^\alpha$ and be also independent of $X_1,\cdots,X_n$.  For each $j=1,\cdots,n$, define
\begin{eqnarray*}
X_{j,R}=\begin{cases}
X_j,\quad&{X_j\in B_R,}\\
Y_j,\quad&{X_j\in \R_>^N\setminus B_R}.
\end{cases}
\end{eqnarray*}
Then $X_{1,R},\cdots,X_{n,R}$ are independent with common distribution $\mu^\alpha_R$. Set
$$\mu_{n,R}=\frac{1}{n}\sum_{j=1}^n\delta_{X_{j,R}}.$$
Let
$$f_{n,R,t}(\cdot)=\frac{1}{n}\sum_{j=1}^n p^\alpha_t(X_{j,R},\cdot),$$
and denote $\mu_{n,R,t}=f_{n,R,t}\mu^\alpha$.

In order to estimate $\E\big[\W_2(\mu_n,\mu^\alpha)^2\big]$, the idea is simply to write
\begin{eqnarray}\label{pf-main-1}
\E\big[\W_2(\mu_n,\mu^\alpha)^2\big]\preceq \E\big[\W_2(\mu_n,\mu_{n,R})^2\big]+\E\big[\W_2(\mu_{n,R},\mu_{n,R,t})^2\big]+\E\big[\W_2(\mu_{n,R,t},\mu^\alpha)^2\big],
\end{eqnarray}
 by applying the triangular inequality, and then estimate each term in the right hand side of \eqref{pf-main-1} which is presented respectively in \textsc{Part} (1)--(3) below.

\underline{\textsc{Part} (1)}. Estimate $\E\big[\W_2(\mu_n,\mu_{n,R})^2\big]$. By the convexity of $\W_2^2$ (see e.g. \cite[Theorem 4.8]{Villani2008}) or by noticing that $\frac{1}{n}\sum_{j=1}^n\delta_{X_j}(\d x)\delta_{X_{j,R}}(\d y)\in \Pi(\mu_n,\mu_{n,R})$, we have
\begin{eqnarray*}
\W_2(\mu_n,\mu_{n,R})^2&\leq&\frac{1}{n}\sum_{j=1}^n\varrho_N(X_j,X_{j,R})^2\\
&=&\frac{1}{n}\sum_{j=1}^n\varrho_N(X_j,Y_j)^2\idx_{\R_>^N\setminus B_R}(X_j)\\
&\leq&\frac{2}{n}\sum_{j=1}^n\big[\varrho_N(0,X_j)^2+\varrho_N(0,Y_j)^2\big]\idx_{\R_>^N\setminus B_R}(X_j).
\end{eqnarray*}
Then
\begin{eqnarray*}
\E\big[\W_2(\mu_n,\mu_{n,R})^2\big]&\leq&4\int_{\R_>^N\setminus B_R}\varrho_N(0,x)^2\,\mu^\alpha(\d x)\\
&=&16\int_{\R_>^N\setminus B_R}(x_1+\cdots+x_N)\prod_{j=1}^N\frac{x_j^{\alpha_j}e^{-x_j}}{\Gamma(1+\alpha_j)}\,\d x_1\cdots\d x_N\\
&=:&f(R),\quad R>0.
\end{eqnarray*}
Let
$$h(R)=\prod_{j=1}^N \int_{R^2/4}^\infty\frac{x_j^{\alpha_j+1}e^{-x_j}}{\Gamma(1+\alpha_j)}\,\d x_j,\quad R>0.$$
Noting that $B_R=B_R^1\times\cdots\times B_R^N$, we may easily check that, as $R\rightarrow\infty$, both functions $R\mapsto f(R)$ and $R\mapsto h(R)$ have the same order.  For each $j\in\{1,\cdots,N\}$, it is easy to see that the integral
 $$\int_{R^2/4}^\infty x_j^{\alpha_j+1}e^{-x_j} \,\d x_j$$
has the order of $R$ as $R^{2(\alpha_j+1)}e^{-R^2/4}$ when $R$ tends to $\infty$. Hence, as a function of $R$, $\E\big[\W_2(\mu_n,\mu_{n,R})^2\big]$ has at most the order $R^{2(\alpha_\star+N)}e^{-R^2/4}$ as $R\rightarrow\infty$, where $\alpha_\star:=\alpha_1+\cdots+\alpha_N$ as before.

Now choosing $R=2(C\log n)^{1/2}$ for some large enough constant $C>0$, we derive that there exists a constant $c>1$ such that
\begin{eqnarray}\label{part1}
\E\big[\W_2(\mu_n,\mu_{n,R})^2\big]\preceq \frac{1}{n^c},
\end{eqnarray}
for every large enough $n$.

\underline{\textsc{Part} (2)}.  Estimate $\E\big[\W_2(\mu_{n,R},\mu_{n,R,t})^2\big]$. 
Let $$\hat{\pi}(\d x,\d y)=\frac{1}{n}\sum_{j=1}^n \delta_{X_{j,R}}(\d x)p_t^\alpha(X_{j,R},y)\mu^\alpha(\d y),\quad t>0.$$ It is easy to check that $\hat{\pi}\in\Pi(\mu_{n,R},\mu_{n,R,t})$ for every $t>0$. Then
\begin{eqnarray*}
\W_2(\mu_{n,R},\mu_{n,R,t})^2&\leq&\frac{1}{n}\sum_{j=1}^n \W_2\big(\delta_{X_{j,R}}, p^\alpha_t(X_{j,R},\cdot)\mu^\alpha\big)^2\\
&\leq&\frac{1}{n}\sum_{j=1}^n \int_{\R_>^N}\varrho_N(X_{j,R},y)^2p^\alpha_t(X_{j,R},y)\,\mu^\alpha(\d y).
\end{eqnarray*}
Hence
\begin{eqnarray*}
\E\big[\W_2(\mu_{n,R},\mu_{n,R,t})^2\big]&\leq&\frac{1}{n}\sum_{j=1}^n \int_{\R_>^N} \int_{\R_>^N}\varrho_N(x,y)^2p^\alpha_t(x,y)\,\mu_R^\alpha(\d x)\mu^\alpha(\d y)\\
&\leq&\frac{1}{\mu^\alpha(B_R)} \int_{\R_>^N} \int_{\R_>^N}\varrho_N(x,y)^2p^\alpha_t(x,y)\,\mu^\alpha(\d x)\mu^\alpha(\d y)\\
&=&\frac{4}{\mu^\alpha(B_R)}\sum_{j=1}^N \int_{\R_>} \int_{\R_>}|\sqrt{x_j}-\sqrt{y_j}|^2 p^{\alpha_j}_t(x_j,y_j)\,\mu^{\alpha_j}(\d x_j)\mu^{\alpha_j}(\d y_j)\\
&=:&\frac{4}{\mu^\alpha(B_R)}\sum_{j=1}^N {\rm I}_j(t).
\end{eqnarray*}

For convenience, we omit the subscript $j$ in the expression of ${\rm I}_j$ and write
$${\rm I}(t)=\int_{\R_>} \int_{\R_>}|\sqrt{x}-\sqrt{y}|^2 p^{\alpha}_t(x,y)\,\mu^{\alpha}(\d x)\mu^{\alpha}(\d y).$$
Let $\phi(x)=\sqrt{x}$, $x\in\R_>$. Then, for every $t>0$,
\begin{eqnarray*}
{\rm I}(t)&=&\int_{\R_>} \int_{\R_>}|\phi(x)-\phi(y)|^2 p^{\alpha}_t(x,y)\,\mu^{\alpha}(\d x)\mu^{\alpha}(\d y)\\
&=&2\int_{\R_>} \int_{\R_>}\big[\phi^2(x)-\phi(x)\phi(y)\big] p^{\alpha}_t(x,y)\,\mu^{\alpha}(\d x)\mu^{\alpha}(\d y)\\
&=&2\Big[ \int_{\R_>} P_t^\alpha(\phi^2)\,\d\mu^{\alpha}-\int_{\R_>} (P_{t/2}^\alpha\phi)^2\,\d\mu^{\alpha}\Big]\\
&=&2\Big[ \int_{\R_>} \phi^2\,\d\mu^{\alpha}-\int_{\R_>} (P_{t/2}^\alpha\phi)^2\,\d\mu^{\alpha}\Big]\\
&=&-2\int_{0}^{t/2}\Big(\frac{\d}{\d s}\int_{\R_>}(P_s^\alpha\phi)^2\,\d\mu^{\alpha}\Big)\,\d s\\
&=&-4\int_{0}^{t/2}\int_{\R_>}(P_s^\alpha\phi)\L^\alpha(P_s^\alpha\phi)\,\d\mu^{\alpha}\d s\\
&=&2\int_{0}^{t/2}\int_{\R_>}{\rm G}^\alpha(P_s^\alpha\phi)\,\d\mu^{\alpha}\d s\\
&\leq&2\int_{0}^{t/2}\int_{\R_>}e^{-s}P_s^\alpha{\rm G}^\alpha(\phi)\,\d\mu^{\alpha}\d s\\
&=&2\int_{0}^{t/2}e^{-s}\,\d s\int_{\R_>} {\rm G}^\alpha(\phi)\,\d\mu^{\alpha}\\
&\le&1-e^{-t/2}\leq \frac{t}{2},
\end{eqnarray*}
 where in the second equality we used the symmetry $p^\alpha_t(x,y)=p^\alpha_t(y,x)$, in the third equality we used the semigroup property of $(P^\alpha_t)_{t>0}$,
 in the fourth equality we used the invariance of $P_t^\alpha$ w.r.t. $\mu^\alpha$, and in the first inequality we employed the Bakry--Ledoux gradient estimate, i.e.,
 $${\rm G}^\alpha(P_s^\alpha\phi)\leq e^{-s}P_s^\alpha{\rm G}^\alpha(\phi),\quad s>0,$$
 since $\L^\alpha$ satisfies the curvature-dimension condition $\CD(1/2,\infty)$ when $\alpha\in[-1/2,\infty)$ (see e.g. \cite{BGL2014}).

 Thus, for every $ t>0$ and for all large enough $R>0$ such that $\mu^\alpha(B_R)\geq1/2$,
 \begin{eqnarray}\label{part2}
\E\big[\W_2(\mu_{n,R},\mu_{n,R,t})^2\big]\leq\frac{2Nt}{\mu^\alpha(B_R)}\leq 4N t.
\end{eqnarray}

\underline{\textsc{Part} (3)}.  Estimate $\E\big[\W_2(\mu_{n,R,t},\mu^\alpha)^2\big]$. This part is more technical.
By \cite[Theorem 2]{Ledoux1} (see also \cite{eAST} and \cite{fywany22}), i.e.,
\begin{equation*}\label{W2UB1}
\W_2(h\mu^\alpha,\mu^\alpha)^2\leq 4\int_{\R_>^N}{\rm G}^\alpha\big((-\L^\alpha)^{-1}(h-1)\big)\,\d\mu^\alpha,\quad h\geq0,\,\mu^\alpha(h)=1,
\end{equation*}
we have
\begin{eqnarray}\label{pf-main-3-0}
\E\big[\W_2(\mu_{n,R,t},\mu^\alpha)^2\big]&\leq& 4\E\Big[\int_{\R_>^N}{\rm G}^\alpha\big((-\L^\alpha)^{-1}[f_{n,R,t}-1]\big)(y)\,\mu^\alpha(\d y)\Big]\cr
&=&4\E\Big[\int_{\R_>^N}(f_{n,R,t}-1)\int_0^\infty P_s^\alpha(f_{n,R,t}-1)\,\d s\,\d\mu^\alpha\Big]\cr
&=&4\E\int_0^\infty\int_{\R_>^N}\big[P_{s/2}^\alpha(f_{n,R,t}-1)\big]^2\,\d\mu^\alpha\,\d s\cr
&=&8\E\int_0^\infty\int_{\R_>^N}\big[P_s^\alpha(f_{n,R,t}-1)\big]^2\,\d\mu^\alpha\,\d s.
\end{eqnarray}

Let $y\in\R_>^N$. Set
$$g(y):=\frac{1}{n}\sum_{j=1}^n\big(p_t^\alpha(X_{j,R},y)-\E[p_t^\alpha(X_{j,R},y)]\big),\quad b(y):=\E[p_t^\alpha(X_{j,R},y)]-1.$$
Recalling that
$$f_{n,R,t}=\frac 1 n \sum_{j=1}^n p_t^\alpha(X_{j,R},\cdot),$$
we have
\begin{eqnarray}\label{pf-main-3-1}
f_{n,R,t}(y)-1=g(y)+b(y).
\end{eqnarray}

Since
\begin{eqnarray}\label{pf-main-3-2}
b(y)&=&\int_{\R_>^N}p_{t}^\alpha(x,y)\,\mu_R^\alpha(\d x)-1\cr
&=&\frac{1}{\mu^\alpha(B_R)}\int_{\R_>^N}[\idx_{B_R}-\mu^\alpha(B_R)]p_{t}^\alpha(x,y)\,\mu^\alpha(\d x)\cr
&=&\frac{1}{\mu^\alpha(B_R)}P_t^\alpha[\idx_{B_R}-\mu^\alpha(B_R)](y),
\end{eqnarray}
we have
$$P_s^\alpha b(y)=\frac{1}{\mu^\alpha(B_R)}P_{t+s}^\alpha[\idx_{B_R}-\mu^\alpha(B_R)](y).$$
By the invariance of $(P^\alpha_t)_{t>0}$ w.r.t. $\mu^\alpha$, it is immediate to see
\begin{eqnarray*}
\mu^\alpha(P_s^\alpha b)=\frac{1}{\mu^\alpha(B_R)}\int_{\R_>^N}P_{t+s}^\alpha[\idx_{B_R}-\mu^\alpha(B_R)]\,\d\mu^\alpha=0.
\end{eqnarray*}
Hence, by this and \eqref{pf-main-3-2}, we have
\begin{eqnarray}\label{pf-main-3-3}
\mu^\alpha[(P_{t+s}^\alpha \idx_{B_R})^2]&=&\int_{\R_>^N}\mu^\alpha(B_R)^2(1+P_s^\alpha b)^2\,\d\mu^\alpha\cr
&=&\mu^\alpha(B_R)^2\Big(1+\int_{\R_>^N}(P_s^\alpha b)^2\,\d\mu^\alpha\Big).
\end{eqnarray}
Since $X_1,\cdots,X_n$ are independent and have the common distribution $\mu^\alpha$, we derive
\begin{eqnarray*}
\E\big[(P^\alpha_sg(y))^2\big]&=&\E\Big\{\Big(\frac{1}{n}\sum_{j=1}^n\big[p_{t+s}^\alpha(X_{j,R},y)-\E p_{t+s}^\alpha(X_{j,R},y)\big]\Big)^2\Big\}\cr
&=&\frac{1}{n}\E\big([p_{t+s}^\alpha(X_{1,R},y)-\E p_{t+s}^\alpha(X_{1,R},y)]^2\big)\cr
&=&\frac{1}{n}\Big[\int_{\R_>^N}p_{t+s}^\alpha(x,y)^2\,\mu^\alpha_R(\d x)-\Big(\int_{\R_>^N}p_{t+s}^\alpha(x,y)\,\mu^\alpha_R(\d x)\Big)^2\Big]\cr
&=&\frac{1}{n}\Big[\int_{\R_>^N}p_{t+s}^\alpha(x,y)^2\,\mu^\alpha_R(\d x)-\frac{1}{\mu^\alpha(B_R)^2}\big(P_{t+s}^\alpha\idx_{B_R}(y)\big)^2\Big].
\end{eqnarray*}
Hence, by \eqref{pf-main-3-3}, the symmetry and the semigroup property, we deduce that
\begin{eqnarray}\label{pf-main-3-4}
&&\int_{\R_>^N}\E\big[(P^\alpha_sg(y))^2\big]\,\mu^\alpha(\d y)\cr
&=&\frac{1}{n}\int_{\R_>^N}\int_{\R_>^N}p_{t+s}^\alpha(x,y)^2\,\mu^\alpha_R(\d x)\,\mu^\alpha(\d y)
-\frac{1}{n}\int_{\R_>^N}\Big(1+\int_{\R_>^N}(P_s^\alpha b)^2\,\d\mu^\alpha\Big)\,\d\mu^\alpha\cr
&\le&\frac{1}{n}\int_{\R_>^N}[ p_{2(t+s)}^\alpha(x,x)-1]\,\mu^\alpha_R(\d x)- \frac{1}{n}\int_{\R_>^N}(P_s^\alpha b)^2\,\d\mu^\alpha.
\end{eqnarray}
Combining \eqref{pf-main-3-1} and \eqref{pf-main-3-4}, we obtain
\begin{eqnarray}\label{pf-main-3-5}
&&\int_{\R_>^N}\E\big[P_s^\alpha (f_{n,R,t}-1)(y)\big]^2\,\mu^\alpha(\d y)\cr
&=&\int_{\R_>^N}\E\big[(P_s^\alpha g(y))^2+(P_s^\alpha b(y))^2\big]\,\mu^\alpha(\d y)\cr
&\le&\frac{1}{n}\int_{\R_>^N} \big( p_{2(t+s)}^\alpha(x,x)-1\big)\,\mu^\alpha_R(\d x)+ \Big(1-\frac{1}{n}\Big)\int_{\R_>^N}(P_s^\alpha b)^2\,\d\mu^\alpha.
\end{eqnarray}

Since $b$ has mean zero, by the Poincar\'{e} inequality implied by the curvature-dimension condition $\CD(1/2,\infty)$,  we have
$$\int_{\R_>^N}(P_s^\alpha b)^2\,\d\mu^\alpha\leq e^{-s}\int_{\R_>^N}  b^2\,\d\mu^\alpha,\quad s>0.$$
Note that, by \eqref{pf-main-3-2},
\begin{eqnarray*}
\int_{\R_>^N}  b^2\,\d\mu^\alpha&=&\frac{1}{\mu^\alpha(B_R)^2}\int_{\R_>^N}  \big(P_t^\alpha[\idx_{B_R}-\mu^\alpha(B_R)]\big)^2\,\d\mu^\alpha\\
&\leq&\frac{1}{\mu^\alpha(B_R)^2}\int_{\R_>^N}  \big(\idx_{B_R}-\mu^\alpha(B_R)\big)^2\,\d\mu^\alpha\\
&=&\frac{1-\mu^\alpha(B_R)}{\mu^\alpha(B_R)}.
\end{eqnarray*}
Hence
$$\int_{\R_>^N}(P_s^\alpha b)^2\,\d\mu^\alpha\leq e^{-s}\frac{1-\mu^\alpha(B_R)}{\mu^\alpha(B_R)},\quad s>0.$$
As above, letting $R=2\sqrt{C\log n}$, we have some constant $c>1$ such that
$$1-\mu^\alpha(B_R)\preceq\frac{1}{n^c},$$
for every large enough $n$. Hence, together with \eqref{pf-main-3-5}, there exists a constant $c>1$ such that
\begin{eqnarray}\label{pf-main-3-6}
&&\int_{\R_>^N}\E\big[P_s^\alpha (f_{n,R,t}-1)(y)\big]^2\,\mu^\alpha(\d y)\cr
&\preceq&\frac{1}{n}\int_{\R_>^N} \big( p_{2(t+s)}^\alpha(x,x)-1\big)\,\mu^\alpha_R(\d x) + \frac{1}{n^c}e^{-s},\quad t,s>0,
\end{eqnarray}
for every large enough $n$.

Thus, combining with \eqref{pf-main-3-0} and \eqref{pf-main-3-6}, we have some constants $C>0,\,c>1$ such that
\begin{equation}\begin{split}\label{pf-main-4}
\E\big[\W_2(\mu_{n,R,t},\mu^\alpha)^2\big]&\preceq\frac{1}{n}\int_0^\infty\int_{\R_>^N} \big( p_{2(t+s)}^\alpha(x,x)-1\big)\,\mu^\alpha_R(\d x)\,\d s + \frac{1}{n^c}\cr
&=\frac{1}{2n}\int_{2t}^\infty\int_{\R_>^N} \big( p_s^\alpha(x,x)-1\big)\,\mu^\alpha_R(\d x)\,\d s + \frac{1}{n^c},\quad t>0,
\end{split}\end{equation}
for $R=2\sqrt{C\log n}$  and every large enough $n$.

Now we shall estimate
$$\int_{2t}^\infty\int_{\R_>^N} \big( p_s^\alpha(x,x)-1\big)\,\mu^\alpha_R(\d x)\,\d s.$$
Let $x=(x_1, \cdots,x_N)\in \R_>^N$. By \eqref{HUP-1} and \eqref{HUP}, it is easy to see that, for each $j=1,\cdots,N$,
\begin{eqnarray}\label{DHUP}
p_t^{\alpha_j}(x_j,x_j)\preceq\begin{cases}
\frac{1}{(1-e^{-t})^{\alpha_j+1}}\exp\Big(-\frac{2e^{-t}x_j}{1-e^{-t}}\Big),\quad &0<x_j\le\frac{1-e^{-t}}{2e^{-t/2}},\\
\frac{(4e^{-t}x_j^2)^{-\alpha_j/2-1/4}}{e(1-e^{-t})^{1/2}}\exp\Big(-\frac{2e^{-t}x_j-2(e^{-t}x_j^2)^{1/2}}{1-e^{-t}}\Big),\quad& x_j>\frac{1-e^{-t}}{2e^{-t/2}}.
\end{cases}
\end{eqnarray}
Recalling that
$$p_t^\alpha(x,x)=\prod_{j=1}^Np_t^{\alpha_j}(x_j,x_j),$$
we have, for any large enough $R>0$,
\begin{equation}\begin{split}\label{VBR}
&\int_{B_R}p_t^\alpha(x,x)\d\mu^\alpha(x)\\
&\preceq\int_{B_R^1\times\cdots\times B_R^N}\Big(\prod_{j=1}^Np_t^{\alpha_j}(x_j,x_j)x_j^{\alpha_j} e^{-x_j}\Big)\,\d x_1\cdots\d x_N\\
&=\prod_{j=1}^N\int_{B_R^j}p_t^{\alpha_j}(x_j,x_j)x^{\alpha_j} e^{-x_j}\,\d x_j\\
&=\prod_{j=1}^N\Big(\int_0^{\frac{1-e^{-t}}{2e^{-t/2}}}p_t^{\alpha_j}(x_j,x_j)x^{\alpha_j} e^{-x_j}\,\d x_j+\int_{\frac{1-e^{-t}}{2e^{-t/2}}}^{\frac{R^2}{4}}p_t^{\alpha_j}(x_j,x_j)x_j^{\alpha_j} e^{-x_j}\d x_j\Big).
\end{split}\end{equation}

We need to estimate the two terms in the parentheses of \eqref{VBR}, and, for simplicity, we omit the subscript $j$ here. According to \eqref{DHUP}, on the one hand, it is easy to get that
\begin{equation}\begin{split}\label{VBR1}
&\int_0^{\frac{1-e^{-t}}{2e^{-t/2}}}p_t^\alpha(x,x)x^\alpha e^{-x}\,\d x\\
&\preceq \frac{1}{(1-e^{-t})^{\alpha+1}}\int_0^{\frac{1-e^{-t}}{2e^{-t/2}}}\exp\Big(-\frac{2e^{-t}x}{1-e^{-t}}\Big)x^\alpha e^{-x}\,\d x\\
&=\frac{1}{(1-e^{-t})^{\alpha+1}}\int_0^{\frac{1-e^{-t}}{2e^{-t/2}}}x^\alpha\exp\Big(-\frac{(1+e^{-t})x}{1-e^{-t}}\Big)\,\d x,
\end{split}\end{equation}
and on the other hand, when $\alpha\in[-\frac 1 2,\infty)$, we have
\begin{equation}\begin{split}\label{VBR2}
&\int_{\frac{1-e^{-t}}{2e^{-t/2}}}^{\frac{R^2}{4}}p_t^\alpha(x,x)x^\alpha e^{-x}\,\d x\\
&\preceq\int_{\frac{1-e^{-t}}{2e^{-t/2}}}^{\frac{R^2}{4}}\frac{(4e^{-t}x^2)^{-\alpha/2-1/4}}{e(1-e^{-t})^{1/2}}
\exp\Big(-\frac{2e^{-t}x-2(e^{-t}x^2)^{1/2}}{1-e^{-t}}\Big)x^\alpha e^{-x}\,\d x\\
&\preceq\frac{e^{t(\alpha/2+1/4)}}{(1-e^{-t})^{1/2}}\Big(\frac{1-e^{-t}}{e^{-t/2}}\Big)^{-\alpha-1/2}
\int_{\frac{1-e^{-t}}{2e^{-t/2}}}^{\frac{R^2}{4}}x^\alpha\exp\Big(-\frac{(1+e^{-t}-2e^{-t/2})x}{1-e^{-t}}\Big)\,\d x\\
&\preceq\frac{1}{(1-e^{-t})^{\alpha+1}}\int_{\frac{1-e^{-t}}{2e^{-t/2}}}^{\frac{R^2}{4}}x^\alpha
\exp\Big(-\frac{(1+e^{-t}-2e^{-t/2})x}{1-e^{-t}}\Big)\,\d x.
\end{split}\end{equation}
Combining \eqref{VBR1} and \eqref{VBR2} with \eqref{VBR}, for $\alpha\in [-1/2,\infty)^N$, we have the following estimate, i.e.,
\begin{eqnarray*}
\int_{B_R}p_t^\alpha(x,x)\, \mu(\d x)&\preceq& \prod_{j=1}^N\frac{1}{(1-e^{-t})^{\alpha_j+1}}\int_0^{\frac{R^2}{4}}x_j^{\alpha_j}
\exp\Big(-\frac{(1+e^{-t}-2e^{-t/2})x_j}{1-e^{-t}}\Big)\,\d x_j\\
&=&\prod_{j=1}^N\frac{1}{(1-e^{-t})^{\alpha_j+1}}\int_0^{\frac{1+e^{-t}-2e^{-t/2}}{1-e^{-t}}\frac{R^2}{4}}
\Big(\frac{1-e^{-t}}{1+e^{-t}-2e^{-t/2}}\Big)^{\alpha_j+1}x_j^{\alpha_j} e^{-x_j}\,\d x_j\\
&\preceq&\prod_{j=1}^N\frac{1}{(1-e^{-t})^{\alpha_j+1}\theta^{2(\alpha_j+1)}}\mu^{\alpha_j}(B_{\theta R}^j)\\
&=&\frac{1}{(1-e^{-t})^{\alpha_\star+N}\theta^{2(\alpha_\star+N)}}\mu^{\alpha}(B_{\theta R}),
\end{eqnarray*}
where $\theta:=\frac{1-e^{-t/2}}{\sqrt{1-e^{-t}}}$ and it obviously belongs to $(0,1)$ for every $t>0$. Thus,
\begin{eqnarray}\label{pf-main-3-7}\begin{split}
\int_{\R_>^N}[p_t^\alpha(x,x)-1]\,\mu^\alpha_R(\d x)&=\frac{1}{\mu^\alpha(B_R)}\int_{B_R}p_t^\alpha(x,x)\,\mu^\alpha(\d x)-1\cr
&\preceq\frac{1}{(1-e^{-t})^{\alpha_\star+N}\theta^{2(\alpha_\star+N)}}\cdot\frac{\mu^{\alpha}(B_{\theta R})}{\mu^{\alpha}(B_{R})}-1.
\end{split}\end{eqnarray}

If $\theta R\le 1$, then $t\le 2\log(\frac{R^2+1}{R^2-1})\leq \frac{8}{R^2}$ for large enough $R>0$. Then
\begin{eqnarray}\label{pf-main-3-8}\begin{split}
\frac{1}{(1-e^{-t})^{\alpha_\star+N}\theta^{2(\alpha_\star+N)}}\frac{\mu^\alpha(B_{\theta R})}{\mu^\alpha(B_R)}-1
&\preceq \frac{(\theta R)^{2(\alpha_\star+N)}}{(1-e^{-t})^{\alpha_\star+N}\theta^{2(\alpha_\star+N)}}\cr
&\preceq \frac{R^{2(\alpha_\star+N)}}{t^{\alpha_\star+N}},
\end{split}\end{eqnarray}
where we used the fact that $\mu^\alpha(B_R)\geq1/2$ for large enough $R>0$ and
\begin{eqnarray*}
\mu^\alpha(B_{\theta R})&=&\prod_{j=1}^N\int_0^{(\theta R)^2/4}\frac{x_j^{\alpha_j}e^{-x_j}}{\Gamma(1+\alpha_j)}\,\d x_j\\
&\preceq& \prod_{j=1}^N(\theta R)^{2(\alpha_j+1)}=(\theta R)^{2(\alpha_\star+N)}.
\end{eqnarray*}
If $\theta R>1$, then $e^{-t/2}< \frac{R^2-1}{R^2+1}\in(0,1)$, $t>0$.  Hence
\begin{equation*}\begin{split}
&\mu^\alpha(B_{\theta R})-(1-e^{-t/2})^{2(\alpha_\star+N)}\mu^\alpha(B_R)\\
&=\big[\mu^\alpha(B_{\theta R})-\mu^\alpha(B_R)\big]+\big[1-(1-e^{-t/2})^{2(\alpha_\star+N)}\big]\mu^\alpha(B_R)\\
&\le\mu^\alpha(B_R)\big[1-(1-e^{-t/2})^{2(\alpha_\star+N)}\big]\\
&\le 2(\alpha_\star+N)\mu^\alpha(B_R)e^{-t/2}.
\end{split}\end{equation*}
Then
\begin{equation}\label{pf-main-3-9}\begin{split}
&\quad\frac{1}{(1-e^{-t})^{\alpha_\star+N}\theta^{2(\alpha_\star+N)}}\frac{\mu^\alpha(B_{\theta R})}{\mu^\alpha(B_R)}-1\\
&=\frac{1}{(1-e^{-t/2})^{2(\alpha_\star+N)}}\frac{\mu^\alpha(B_{\theta R})}{\mu^\alpha(B_R)}-1\cr
&=\frac{\mu^\alpha(B_{\theta R})-(1-e^{-t/2})^{2(\alpha_\star+N)}\mu^\alpha(B_R)}{(1-e^{-t/2})^{2(\alpha_\star+N)}\mu^\alpha(B_R)}\cr
&\le\frac{2(\alpha_\star+N)e^{-t/2}}{(1-e^{-t/2})^{2(\alpha_\star+N)}}.
\end{split}\end{equation}

Combining \eqref{pf-main-3-7}, \eqref{pf-main-3-8} and \eqref{pf-main-3-9} with $T=2\log(\frac{R^2+1}{R^2-1})\leq\frac{8}{R^2}$ for large enough $R>0$ and $T\gg t$, we obtain that, when $\alpha_\star+N>1$,
\begin{equation*}\label{pf-main-5}\begin{split}
{\rm J}&:=\int_{2t}^\infty \int_{\R_>^N}[p_s(x,x)-1]\,\d\mu^\alpha_R(x)\d s \\
&=\int_{2t}^T\int_{\R_>^N}[p_s(x,x)-1]\,\d\mu^\alpha_R(x)\d s+\int_T^\infty\int_{\R_>}[p_s(x,x)-1]\,\d\mu^\alpha_R(x)\d s\\
&\preceq \int_{2t}^T \frac{R^{2(\alpha_\star+N)}}{s^{\alpha_\star+N}}\,\d s+\int_T^\infty \frac{e^{-s/2}}{(1-e^{-s/2})^{2(\alpha_\star+N)}}\,\d s\\
&\preceq \frac{R^{2(\alpha_\star+N)}}{t^{\alpha_\star+N-1}}+R^{4(\alpha_\star+N)-2};
\end{split}\end{equation*}
when $\alpha_\star+N=1$,
$${\rm J}\preceq R^2\log\left(\frac 1 t\right)+R^2;$$
when $\alpha_\star+N\in(1/2,1)$,
$${\rm J}\preceq R^{2(\alpha_\star+N)}T^{1-\alpha_\star-N}+R^{4(\alpha_\star+N)-2}\preceq R^{4(\alpha_\star+N)-2};$$
when $\alpha_\star+N=1/2$,
$${\rm J}\preceq R\sqrt{T}+\log\left(\frac{1}{T}\right)\preceq 1+\log R^2.$$
Thus, together with \eqref{pf-main-4}, we deduce that
\begin{equation}\begin{split}\label{part3}
&\E\big[\W_2(\mu_{n,R,t},\mu^\alpha)^2\big]\preceq\frac{1}{n}{\rm J} + \frac{1}{n^c}\cr
&\preceq\begin{cases}\medskip
\frac{1}{n^{c}} +\frac 1 {n}\big[\frac{R^{2(\alpha_\star+N)}}{t^{\alpha_\star+N-1}}+R^{4(\alpha_\star+N)-2}\big],\quad&{\alpha_\star\in(1-N,\infty),}\\\medskip
\frac{1}{n^{c}} +\frac 1 {n}\big[R^2\log\left(\frac 1 t\right)+R^2\big],\quad&{\alpha_\star=1-N,}\\\medskip
\frac{1}{n^{c}} +\frac 1 {n}R^{4(\alpha_\star+N)-2},\quad&{\alpha_\star\in (1/2-N,1-N),}\\\medskip
\frac{1}{n^{c}} +\frac 1 {n}\big(1+\log R^2\big),\quad&{\alpha_\star=1/2-N,}
\end{cases}
\end{split}\end{equation}
which finishes the proof of \textsc{Part} (3).

Finally, combining \eqref{pf-main-1} with the estimates in \eqref{part1}, \eqref{part2} and \eqref{part3} together, we arrive at
\begin{equation}\label{pf-main-6}
\E\big[\W_2(\mu_n,\mu^\alpha)^2\big]\preceq\begin{cases}\medskip
\frac{1}{n^{c}}+t+\frac 1 {n}\big[\frac{R^{2(\alpha_\star+N)}}{t^{\alpha_\star+N-1}}+R^{4(\alpha_\star+N)-2}\big],\quad&{\alpha_\star\in(1-N,\infty),}\\\medskip
\frac{1}{n^{c}}+t+\frac 1 {n}\big[R^2\log\left(\frac 1 t\right)+R^2\big],\quad&{\alpha_\star=1-N,}\\\medskip
\frac{1}{n^{c}}+t+\frac 1 {n}R^{4(\alpha_\star+N)-2},\quad&{\alpha_\star\in (1/2-N,1-N),}\\\medskip
\frac{1}{n^{c}}+t+\frac 1 {n}\big(1+\log R^2\big),\quad&{\alpha_\star=1/2-N,}
\end{cases}
\end{equation}
for any $R\asymp \sqrt{\log n}$ and large $n$. Optimizing \eqref{pf-main-6} in $t>0$, we complete the proof of \eqref{main-thm-1} for every large enough $n$.
\end{proof}

\subsection*{Acknowledgment}\hskip\parindent
The authors would like to thank the referee for corrections and helpful comments, and acknowledge financial supports from the National Key R\&D Program of China (Grant No. 2022YFA1006000)
and the National Natural Science Foundation of China (No.11831014).

\end{document}